\newtheorem{theorem}{Theorem}
\newtheorem*{remark}{Remark}
\newtheorem{lemma}{Lemma}
\newcommand{\R}{\mathbb R}
\newcommand{\eps}{\varepsilon}
\newcommand{\sgn}{\mbox{sgn}}
\newcommand{\dd}{\, \mathrm{d}}
\title{An integro-differential equation without continuous solutions}
\author{Luis Silvestre}
\author{Stanley Snelson}
\thanks{LS was partially supported by NSF grant DMS-1254332. SS was partially supported by NSF grant DMS-1246999.}
\begin{document}

\begin{abstract}
We show an example of a non symmetric integro-differential equation of order $\alpha$, for $\alpha \in (0,1)$, for which H\"older estimates do not hold even though the kernels are comparable to the fractional Laplacian.
\end{abstract}

\maketitle

\section{Introduction}

We are concerned with an integro-differential equation of the usual form
\begin{equation} \label{e:main}  \int_{\R^n} \left( u(x+y) - u(x) - y \cdot \nabla u(x) \chi_{B_1}(y) \right) K(x,y) \dd y = f(x) \ \text{ in } B_1.
\end{equation}
The purpose of this article is to show an example of a kernel $K(x,y)$ satisfying, for some $\alpha \in (0,1)$,
\begin{equation} \label{e:kernel-bounds}  \frac \lambda {|y|^{n+\alpha}} \leq K(x,y) \leq \frac \Lambda {|y|^{n+\alpha}}.
\end{equation}
and a bounded function $f$, for which the solution $u$ of \eqref{e:main} does not satisfy any modulus of continuity a priori in terms of $\|u\|_{L^\infty}$ and $\|f\|_{L^\infty}$.

The key of our example is that we do not make the symmetry assumption $K(x,y)=K(x,-y)$. The correction term $- y \cdot \nabla u(x) \chi_{B_1}(y)$ inside the integrand effectively creates a drift term. Since we take $\alpha<1$, the regularization effect of the symmetric part of the integral does not compensate the effect of this implicit drift. Any modulus of continuity can thus be invalidated following a mechanism similar to that in \cite{silvestre2013loss}.

When the kernel $K$ satisfies the symmetry assumption $K(x,y) = K(x,-y)$, then the solutions to \eqref{e:main} satisfy a regularity estimate in H\"older spaces
\[ \|u\|_{C^\gamma(B_{1/2})} \leq C \left( \|u\|_{L^\infty(\R^n)} + \|f\|_{L^\infty(B_1)} \right),\]
for some $\gamma > 0$. This estimate was obtained first by R. Bass and D. Levin \cite{bass2002harnack}. H\"older estimates of this type have been a topic of high interest in recent years, with several results in this direction for different types of integral equations including \cite{song2004harnack}, \cite{bass2005holder}, \cite{silvestre2006holder}, \cite{caffarelli2009regularity}, \cite{chang2012regularity}, \cite{lara2012regularity}, and also results for parabolic integral equations like \cite{lara2014regularity}, \cite{silvestre2011differentiability} and \cite{schwab2014regularity}.

In some cases, H\"older estimates hold for non symmetric kernels $K(x,y)$. That is the case of the results in \cite{chang2012regularity}, \cite{lara2012regularity} and \cite{schwab2014regularity}. In those cases, for $\alpha < 1$, the equation has to be taken without the gradient correction term. That is, the equation is
\[
\int_{\R^n} \left( u(x+y) - u(x) \right) K(x,y) \dd y = f(x) \ \text{ in } B_1.
\]
Note that for $\alpha \in (0,1)$, the left hand side makes sense for every function $u \in C^1$. While equation \eqref{e:main} is of the form that appears traditionally in the probability literature, it is better not to include the gradient correction term in this case. The result in this note shows that in fact, this correction term ruins any continuity estimate. There have been attempts to obtain H\"older continuity estimates for equations of this form, see for example \cite{kim2012regularity}.

Now we state our main result.

\begin{theorem} \label{t:main}
For any $\alpha \in (0,1)$ and $0<\lambda<\Lambda$, and any modulus of continuity $\eta$, there is a kernel $K(x,y)$ satisfying \eqref{e:kernel-bounds}, and a function $u : \R^n \to \R$ such that
\begin{itemize}
\item $u(x) \in [-1,1]$ for all $x \in \R^n$.
\item $u$ solves the equation \eqref{e:main} with kernel $K(x,y)$ and $f(x)\equiv 0$.
\item The function $u$ does not obey the modulus of continuity $\eta$ at the origin.
\end{itemize}
\end{theorem}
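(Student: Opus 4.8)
The mechanism is that, for $\alpha<1$, the gradient correction $-y\cdot\nabla u(x)\chi_{B_1}(y)$ integrated against an odd kernel produces a genuine first order drift, and a drift is of higher order than the order-$\alpha$ diffusion coming from the symmetric part of the operator. I would first turn this into a pointwise solvability condition and then exhibit a $u$ meeting it.

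\textbf{Reduction.} Given $u\in C^1$ and $x_0\in B_1$, set
\[
h_{x_0}(y):=u(x_0+y)-u(x_0)-y\cdot\nabla u(x_0)\,\chi_{B_1}(y);
\]
since $\alpha<1$ we have $|h_{x_0}(y)|\le C\min(|y|^2,1)$, so $\int_{\R^n}|h_{x_0}(y)|\,|y|^{-n-\alpha}\dd y<\infty$. At the point $x_0$, equation \eqref{e:main} with $f\equiv0$ is the single linear constraint $\int_{\R^n}h_{x_0}(y)\,K(x_0,y)\dd y=0$ on the profile $g(y):=|y|^{n+\alpha}K(x_0,y)$, which \eqref{e:kernel-bounds} forces to take values in $[\lambda,\Lambda]$. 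As $g$ ranges over $[\lambda,\Lambda]$-valued functions, $\int h_{x_0}g\,|y|^{-n-\alpha}\dd y$ takes all values between $\lambda P-\Lambda Q$ and $\Lambda P-\lambda Q$, where $P:=\int_{\{h_{x_0}>0\}}h_{x_0}\,|y|^{-n-\alpha}\dd y$ and $Q:=\int_{\{h_{x_0}<0\}}|h_{x_0}|\,|y|^{-n-\alpha}\dd y$; hence the constraint is solvable within \eqref{e:kernel-bounds} exactly when $0$ lies in that interval, i.e.\ when
\begin{equation*}
\Bigl|\int_{\R^n}h_{x_0}(y)\,|y|^{-n-\alpha}\dd y\Bigr|\ \le\ \frac{\Lambda-\lambda}{\Lambda+\lambda}\int_{\R^n}|h_{x_0}(y)|\,|y|^{-n-\alpha}\dd y.
\tag{$\star$}
\end{equation*}
The left integral is a fixed nonzero multiple of $(-\Delta)^{\alpha/2}u(x_0)$. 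So it suffices to build a bounded $u\colon\R^n\to\R$, equal to a constant outside $B_{3/4}$, smooth on $B_1\setminus\{0\}$ and discontinuous at the origin, for which $(\star)$ holds at every $x_0\in B_1\setminus\{0\}$: one then chooses $K(x_0,\cdot)$ point by point (measurably in $x_0$) so that $\int h_{x_0}K(x_0,\cdot)=0$, rescales $u$ so that $\|u\|_{L^\infty}\le1$, and notes that $u$, being discontinuous at $0$, obeys no modulus of continuity there, in particular not $\eta$. (With $u$ discontinuous at the origin, the equation holds at every point of $B_1\setminus\{0\}$, which is the only place it can be imposed.)

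\textbf{The model solution.} Take $n\ge2$ (the case $n=1$ needs a one-dimensional variant of the same idea) and split $x=(z,w)\in\R^2\times\R^{n-2}$, $z=\rho e^{i\theta}$. Near the origin let $u$ be a logarithmic spiral wave $u(x)=F(\theta-\tau\log\rho)$, with $\tau>0$ and $F$ a fixed smooth $2\pi$-periodic non-constant profile of small amplitude, cut off so that $u$ is constant outside a small ball $B_{r_0}$ and suitably extended to $\R^n$. Since $\theta-\tau\log\rho$ has no limit along rays into the origin, $u$ is genuinely discontinuous there, with oscillation bounded below at every scale. On $B_{r_0}$ the configuration is invariant under $(z,w)\mapsto(\mu z,\mu w)$ combined with $\theta\mapsto\theta+\tau\log\mu$, and both sides of $(\star)$ are homogeneous of degree $-\alpha$ under this scaling, so on the core $(\star)$ reduces to an inequality involving only $F$. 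Where $\nabla u(x_0)\ne0$ it is easy: there $|\nabla u(x_0)|\approx\rho^{-1}$, the term $-y\cdot\nabla u(x_0)\chi_{B_1}(y)$ dominates $h_{x_0}$ on $\{c\rho<|y|<1\}$ and makes $\int|h_{x_0}|\,|y|^{-n-\alpha}\dd y$ of order $\rho^{-1}$, while $|(-\Delta)^{\alpha/2}u(x_0)|$ is of order $\rho^{-\alpha}\le r_0^{\,1-\alpha}\rho^{-1}$, so $(\star)$ holds once $r_0$ is small (in terms of $\lambda,\Lambda,\alpha,n$). Outside the core $u$ is smooth with controlled norms, and shrinking the amplitude of $F$ makes every term small there too.

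\textbf{The main obstacle.} A bounded function detecting the puncture necessarily has critical points --- in fact critical spirals --- accumulating at the origin, and at such an $x_0$ the drift term disappears from the integrand, so $(\star)$ becomes the sharp requirement that $(-\Delta)^{\alpha/2}u(x_0)$ be smaller than $\int|h_{x_0}|\,|y|^{-n-\alpha}\dd y$ by the factor $(\Lambda-\lambda)/(\Lambda+\lambda)$, which is small when $\lambda$ is close to $\Lambda$. A single spiral wave fails this outright: its critical spirals lie at the extrema of $F$, where $u(x_0)=\sup_{B_1}u$ or $\inf_{B_1}u$, so $h_{x_0}$ is one-signed and $(\star)$ is violated. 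The construction must therefore be refined so that the critical points of $u$ in $B_1$ are never global extrema of $u|_{\overline{B_1}}$ and, at each of them, the weighted positive and negative parts $P,Q$ of $h_{x_0}$ are balanced up to the factor $(\Lambda-\lambda)/(\Lambda+\lambda)$ --- e.g.\ by superposing spiral waves with incommensurate rates $\tau_1,\tau_2,\dots$, choosing the profiles with care, and pushing the ``large'' values of $u$ outward toward $\partial B_1$. Carrying this out uniformly along all the critical spirals --- using the scaling symmetry to reduce it to a finite computation with the chosen profiles, together with routine tail estimates for $(-\Delta)^{\alpha/2}$ at the remaining points and outside the core --- is the technical heart of the argument, and I expect the bookkeeping near the critical set, and the explicit choice of profile making $(\star)$ hold there, to be the main difficulty.
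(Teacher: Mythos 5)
Your reduction is correct and even sharp: for a fixed $u\in C^1$ at a point $x_0$, the equation with $f\equiv 0$ is solvable by some kernel satisfying \eqref{e:kernel-bounds} exactly when your condition $(\star)$ holds, i.e.\ when the (absolutely convergent, since $\alpha<1$) fractional Laplacian of $u$ at $x_0$ is dominated by $\frac{\Lambda-\lambda}{\Lambda+\lambda}$ times the total mass $\int|h_{x_0}|\,|y|^{-n-\alpha}\dd y$. The genuine gap is that you never produce a function satisfying $(\star)$ throughout $B_1$: your model spiral $u=F(\theta-\tau\log\rho)$ is, by your own account, \emph{refuted} at its critical spirals (there $h_{x_0}$ is one-signed and $(\star)$ fails), and the proposed repair --- superposing incommensurate spirals, rebalancing $P$ and $Q$ at every critical point up to the factor $(\Lambda-\lambda)/(\Lambda+\lambda)$, pushing extrema outward --- is left as an expectation, with no profile exhibited and no verification that the critical set of the superposition behaves any better. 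Since this is precisely ``the technical heart of the argument,'' the proof is incomplete at its core step. There is also a quantifier mismatch that makes your task harder than the theorem demands, and creates a further problem you only wave at: you aim for a single $u$ discontinuous at the origin (hence breaking \emph{all} moduli), but then the equation cannot even be posed at $0$, whereas the theorem asks for a solution in all of $B_1$; the paper's solution is continuous on $\R^n$ and satisfies the equation classically off a hyperplane and in the viscosity sense on it.

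The paper exploits exactly this slack: for a \emph{given} $\eta$ it suffices to take a continuous, monotone, essentially one-dimensional $u$, which sidesteps your critical-point obstruction entirely (the constructed $u$ is strictly increasing in $x_1$ where it matters, so the drift never vanishes on the delicate set). Concretely, it takes $u_r(x_1)=u_1(x_1/r)$, a smoothed sign function with $\eta(r)<1$, adds $v(x_1)=\sgn(x_1)|x_1|^{1-\alpha-\eps}$ (cut off) so that the drift generated by the odd interior part of the kernel, $L_4(u_r+v)\gtrsim |x_1|^{-\alpha-\eps}$, dominates $|L_1(u_r+v)|\lesssim |x_1|^{-\alpha}+|x_1|^{1-2\alpha-\eps}$ near $x_1=0$ uniformly in $r$ (Lemma \ref{l:bounded}, choosing $a(x)=L_1u/L_4u$ there), leaving only a bounded right-hand side; and then adds a wide linear ramp $w=Cw_K$ (Lemma \ref{l:w}) for which the gradient correction cancels exactly ($L_2w=L_4w$), $|L_1w|$ is small, and the odd \emph{exterior} part $L_3w$ is large, so an intermediate-value choice of $c(x)\in[-1,1]$ in the kernel \eqref{e:kernel} kills the bounded remainder and gives $Lu=0$ in $B_1$. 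In your language, monotonicity plus the far-field ramp make $\int|h_{x_0}|\,|y|^{-n-\alpha}\dd y$ large while leaving $P-Q$ bounded, so the analogue of $(\star)$ holds with room to spare --- no delicate balancing at critical points is ever needed. If you want to salvage your route, the honest options are either to carry out the critical-point balancing in full (nontrivial, and it must survive the worst case $\Lambda/\lambda\to 1$), or to abandon discontinuity and, like the paper, scale a fixed continuous profile to beat the given $\eta$.
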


\begin{remark}
The solution $u$ constructed in the next section is continuous on $\R^n$, smooth on the set $\{x_1 \neq 0\}$, and solves \eqref{e:main} classically on $B_1\setminus\{x_1=0\}$. On the line $\{x_1 =0\}$, $\nabla u(x)$ does not exist, but the equation is satisfied in the viscosity sense, as there are no $C^2$ functions touching $u$ from above or below. More precisely, let $M^\pm$ be the extremal operators
\begin{align*}
M^+ u(x) = \sup \left \{  \int_{\R^n} \left( u(x+y) - u(x) - y \cdot \nabla u(x) \chi_{B_1}(y) \right) K(y) \dd y \ : \ \frac \lambda {|y|^{n+\alpha}} \leq K(y) \leq \frac \Lambda {|y|^{n+\alpha}} \right\}, \\
M^- u(x) = \inf \left \{  \int_{\R^n} \left( u(x+y) - u(x) - y \cdot \nabla u(x) \chi_{B_1}(y) \right) K(y) \dd y \ : \ \frac \lambda {|y|^{n+\alpha}} \leq K(y) \leq \frac \Lambda {|y|^{n+\alpha}} \right\}.
\end{align*}
Then, the function $u$ from Theorem \ref{t:main} satisfies $M^+ u \geq 0$ and $M^- u \leq 0$ in the viscosity sense in $B_1$.

See \cite{kim2012regularity} for a more explicit expression of the operators $M^+$ and $M^-$.
\end{remark}

\section{The proof}

We give the value of the kernel $K(x,y)$ below. The function $u$ will depend on the variable $x_1$ only. Thus, our example is essentially one-dimensional, and we will use the notations $u(x)$ and $u(x_1)$ interchangeably. The stategy of the proof is as follows: first, construct a family of bounded, continuous functions $u_r$ that approximate the discontinuous function $u_0(x_1)= \sgn(x_1)$ as $r\rightarrow 0$. If $r>0$ is chosen small enough, $u_r$ will fail to admit a given modulus of continuity. Next, we will add a continuous, increasing function $v(x_1)$ to $u_r(x_1)$, with $v$ independent of $r$, to ensure that the quantity $L(u_r+v)(x)$, with
\begin{equation}\label{e:L}
L u(x) = \int_{\R^n} (u(x+y)-u(x)-y\cdot \nabla u(x)\chi_{B_1}(y))K(x,y)\dd y,
\end{equation}
satisfies $-C_0 \leq L (u_r+v)(x) \leq C_0$ for $x\in B_1$, with $C_0$ independent of $r$. (See Lemma \ref{l:bounded}.) Since $v$ is increasing in $x_1$, the function $u_r+v$  will still break the given modulus of continuity. Finally, we will add another continuous, increasing function $w(x_1)$ to our solution (see Lemma \ref{l:w}), such that the sum $u_r(x_1)+v(x_1)+w(x_1)$ satisfies $L(u_r+v+w)(x_1) = 0$ for $x_1\in [-1.1]$, and $u_r+v+w$ will also break the given modulus of continuity.

For $0<\alpha<1$ and $0<\lambda<\Lambda$, let us define the kernel $K(x,y)$ as follows: let 
\begin{align*}
K_1(x,y) &= \frac{(\lambda+\Lambda)/2}{|y|^{n+\alpha}},\\
K_2(x,y) &= \sgn(y_1)\frac{\Lambda-\lambda}2\chi_{B_1}(y),\\
K_3(x,y) &= \sgn(y_1)\frac{\Lambda-\lambda}2 \frac 1 {|y|^{n+\alpha}}\chi_{\R^n\setminus B_1}(y).
\end{align*}
Note that $K_1(x,y)$ is even in $y$, and $K_2(x,y)$ and $K_3(x,y)$ are odd. Our kernel is defined by
\begin{equation}\label{e:kernel}
K(x,y) :=  K_1(x,y) + a(x) K_2(x,y) - c(x) K_3(x,y),
\end{equation}
where $a(x)$ is to be chosen later. The condition (\ref{e:kernel-bounds}) implies that we need $|a(x)|\leq 1$ and $|c(x)|\leq 1$ for all $x$.
Let $b(x)$ be the drift vector, which is given by
\[b(x) = \int_{B_1} yK_3(x,y)\dd y = \frac{\Lambda-\lambda}2\int_{B_1} y\,\sgn(y_1)\dd y = \frac{\Lambda-\lambda}2\,\frac{\omega_n}2  e_1,\]
where $\omega_n$ is a constant depending on the dimension $n$ only.

We define 
\begin{align}\label{e:L123}
L_i u(x) &=  \int_{\R^n} (u(x+y) - u(x))K_i(x,y)\dd y, \quad i = 1,2,3,\\ 
L_4 u(x) &= b(x) \cdot \nabla u(x).\nonumber
\end{align}
With this notation, 
\begin{equation}\label{e:operator}
Lu(x) = L_1u(x) + a(x)(L_2u(x) - L_4 u(x))  - c(x) L_3 u(x).
\end{equation}
Note that $L_1$ is a multiple of the usual fractional Laplacian: $L_1 u = -c_{n,\alpha} (-\Delta)^{\alpha/2} u$ for some constant $c_{n,\alpha}>0$,
and for $u$ depending only on $x_1$, $L_4 = C_1\partial_{x_1}$, where $C_1 = \omega_n(\Lambda-\lambda)/4$. It is well known that the gradient and the fractional Laplacian have the following scaling: if $u_r(x) = u(x/r)$, then
\begin{equation}\label{e:scaling}
L_1 u_r(x) = r^{-\alpha}[L_1 u](x/r) \mbox{ and } L_4 u_r(x) = r^{-1}[L_4 u](x/r),
\end{equation}
and we will repeatedly make use of this. Furthermore, simple integral estimates show that $L_2 u(x)$ and $L_3 u(x)$ are bounded in $B_1$ for any function $u$ that is bounded in $\R^n$.

The following lemma establishes a solution $u$ to equation (\ref{e:main}) with bounded right-hand side $f$, such that $u$ breaks a given modulus of continuity.

\begin{lemma}\label{l:bounded}
For any $\alpha\in(0,1)$, $0<\lambda<\Lambda$, and modulus of continuity $\eta$ at the origin, there exist a bounded function $u:\R^n\rightarrow\R$ and a kernel of the form \eqref{e:kernel}, with $c(x)=0$, such that 
\begin{equation}\label{e:bounded}
-C_0\leq Lu(x)\leq C_0, \quad x\in B_1,
\end{equation}
with $L$ as in \eqref{e:L}, and $u$ breaks $\eta$. The function $u$ depends only on $x_1$ and is monotonically increasing. The constant $C_0$ and $\sup_{\R^n} |u|$ depend on $\alpha$, $\lambda$, $\Lambda$, and $n$, but are independent of $\eta$.
\end{lemma}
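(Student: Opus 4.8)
The plan is to make rigorous the heuristic that, when $\alpha<1$, the implicit drift carried by $L_4$ — which scales like a first-order operator — overwhelms the smoothing of the symmetric part $L_1$ — which scales like order $\alpha$ — following the mechanism of \cite{silvestre2013loss}. First I would fix an odd, smooth, strictly increasing $\phi\colon\R\to(-1,1)$ with $\phi\equiv\pm1$ on $\{\pm x_1>1\}$, and set $u_r(x)=\tfrac\delta2\phi(x_1/r)$ for a small constant $\delta=\delta(\alpha,\lambda,\Lambda,n)\in(0,1)$ to be fixed later. Since $\phi$ is increasing, $u_r(r)-u_r(0)=\tfrac\delta2\phi(1)$ is a positive number independent of $r$, so as soon as $\eta(r)<\tfrac\delta2\phi(1)$ the function $u_r$ fails $\eta$ at the origin, and this persists after adding any increasing function of $x_1$. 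By \eqref{e:scaling}, $L_1u_r(x)=\tfrac\delta2 r^{-\alpha}[L_1\phi](x_1/r)$ and $L_4u_r(x)=\tfrac\delta2 r^{-1}C_1\phi'(x_1/r)$, while $L_2u_r$ is bounded uniformly in $r$ because $\|u_r\|_{L^\infty}\le\tfrac\delta2$; away from $\{x_1=0\}$ one has $L_1u_r\to L_1(\tfrac\delta2\sgn)$ as $r\to0$, which is only locally integrable (it behaves like $|x_1|^{-\alpha}\sgn(x_1)$ near the origin), whereas $L_4u_r$ is supported in $\{|x_1|<r\}$ where it has the much larger size $r^{-1}$.

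Next, since $Lu_r=L_1u_r+a(L_2u_r-L_4u_r)$ with $c\equiv0$, I would choose $a$ small on the transition region $\{|x_1|\lesssim r\}$ so that the $r^{-1}$-size drift term does not survive: there take $a(x)$ equal, up to truncation at a fixed level, to $r^{1-\alpha}[L_1\phi](x_1/r)\big/\big(C_1\phi'(x_1/r)\big)$, which is $O(r^{1-\alpha})$ and hence admissible for $r$ small, and which produces the exact cancellation $L_1u_r-aL_4u_r\equiv0$ there, so that $Lu_r=aL_2u_r$ is tiny on the transition region. Outside it, let $a(x)$ tend to $-a_0\sgn(x_1)$ for a constant $a_0=a_0(\delta,\alpha,\lambda,\Lambda,n)$; this is where the drift term is used to absorb the singularity of $L_1u_r$ against the derivative of the corrector introduced below. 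The constraint $|a|\le1$ forces $a_0\le1$, hence $\delta$ small, but $\delta$ depends only on $\alpha,\lambda,\Lambda,n$, not on $\eta$.

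For the corrector I would take $v(x)$ a fixed, bounded, continuous, strictly increasing function of $x_1$ with an infinite-slope corner at the origin behaving like $\sgn(x_1)|x_1|^{1-\alpha}$ there — together with, when $\alpha\ge\tfrac12$, finitely many further corrections of the form $\sgn(x_1)|x_1|^{k(1-\alpha)}$, $k\ge2$ — with the amplitudes chosen recursively so that $-aL_4v=a_0\sgn(x_1)C_1v'(x_1)$ cancels the $|x_1|^{-\alpha}$-singularity of $L_1u_r$ and, successively, the milder singularities of $L_1v$ generated by each correction, leaving $L_1v$ bounded; finitely many corrections suffice exactly because the exponents $k(1-\alpha)$ stay below $1$, so the associated drifts carry the sign compatible with monotonicity. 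Then $u=u_r+v$ is increasing, bounded by a constant $M(\alpha,\lambda,\Lambda,n)$, and still breaks $\eta$, and it remains to verify $-C_0\le Lu\le C_0$ on $B_1$, which I would do by splitting $B_1$ into the transition region $\{|x_1|\lesssim r\}$, the intermediate region $\{r\lesssim|x_1|\lesssim1\}$, and the far region, and bounding on each the contributions of $L_1(u_r+v)$, $aL_2(u_r+v)$, and $-aL_4(u_r+v)$, all of which are either cancelled by construction or controlled by a constant depending only on $\alpha,\lambda,\Lambda,n$.

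The main obstacle is the uniformity in $r$ across these regions. In the transition region $L_1u_r$ and $L_4u_r$ are both of order $r^{-\alpha}$ and must cancel to leading order, which is precisely what dictates the eigenfunction-type formula for $a$; in the intermediate region the singular part $L_1u_r\approx L_1(\tfrac\delta2\sgn)$ must instead be cancelled by $-aL_4v$. The delicate points are the matching of $a$ at the interface between the transition and intermediate regions, the bookkeeping of the non-leading part of $L_1u_r$ and of $L_1v$ (which is genuinely singular when $\alpha\ge\tfrac12$ and is what the iterated correction is designed to handle), and keeping track of all constants so that the final $C_0$ and $\|u\|_{L^\infty}$ depend only on $\alpha,\lambda,\Lambda,n$ and not on the prescribed modulus $\eta$.
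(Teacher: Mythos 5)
Your overall mechanism is the right one (an increasing corrector with an infinite-slope cusp at the origin whose drift handles the $|x_1|^{-\alpha}$ singularity produced by $L_1 u_r$), but the way you prescribe $a(x)$ piecewise and insist on \emph{exact} cancellations leaves genuine gaps. First, on the transition region $\{|x_1|\lesssim r\}$ your $a$ is the quotient $L_1 u_r/L_4 u_r$ (truncated), of size $O(r^{1-\alpha})$, chosen only to kill $L_1u_r - aL_4u_r$. But there $Lu$ also contains $L_1 v$ and $-aL_4 v$: with $v\sim\sgn(x_1)|x_1|^{1-\alpha}$ one has $L_1v\sim|x_1|^{1-2\alpha}$, which is of size $r^{1-2\alpha}$ at $|x_1|\sim r$ and is unbounded (uniformly in $r$) when $\alpha\geq 1/2$, and nothing in your construction controls it on that region -- your iterated correctors cannot help there, because their cancellation mechanism needs the coefficient to equal $-a_0\sgn(x_1)$, which is not the value you assign to $a$ on the transition region. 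Second, the truncation of $a$ is not harmless: at the edge of the transition region $\phi'(x_1/r)\to 0$ while $L_1\phi(x_1/r)\neq 0$, so the untruncated quotient is not $O(r^{1-\alpha})$ there; after truncation the cancellation fails exactly where $|L_1u_r|\sim r^{-\alpha}$, leaving a remainder that blows up as $r\to 0$ unless it is absorbed by the corrector's drift with $|a|$ of order one -- i.e.\ your interface matching (including replacing $L_1u_r$ by its limit $L_1(\tfrac\delta2\sgn)$, an $O(r^{-\alpha})$ error at $|x_1|\sim r$) is precisely where uniformity in $r$ is lost, and the proposal does not supply the estimate that repairs it. The recursive family of correctors with tuned amplitudes and the smallness parameter $\delta$ are symptoms of the same issue: exact cancellation forces sign and amplitude bookkeeping that the argument, as written, does not actually close.

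The paper's proof avoids all of this by replacing exact cancellation with domination. Take a single corrector $v(x_1)=\sgn(x_1)|x_1|^{1-\alpha-\eps}$ (truncated for $|x_1|\ge 2$), with $\eps>0$ small. One then proves the uniform-in-$r$ bounds $|L_1 u_r|\le C|x_1|^{-\alpha}$ and $|L_1 v|\le C|x_1|^{1-2\alpha-\eps}$, while $L_4(u_r+v)\ge (1-\alpha-\eps)|x_1|^{-\alpha-\eps}$; since $-\alpha-\eps$ beats both $-\alpha$ and $1-2\alpha-\eps$, there is $\delta>0$ independent of $r$ with $L_4u\ge|L_1u|$ on $\{|x_1|<\delta\}$. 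One then simply sets $a=L_1u/L_4u$ there (automatically $|a|\le1$, with no amplitude tuning and no smallness of the step's height) and $a=0$ elsewhere, so that $Lu=aL_2u$ near the origin and $Lu=L_1u$ away from it, both bounded by constants depending only on $\alpha,\lambda,\Lambda,n$. In particular it is never necessary to make $L_1v$ bounded (so no iteration is needed for $\alpha\ge1/2$), and there is no interface between different regimes for $a$ beyond the harmless cutoff at $|x_1|=\delta$. If you want to salvage your write-up, the fix is essentially to adopt this choice: define $a$ as the full quotient $L_1(u_r+v)/L_4(u_r+v)$ wherever the drift dominates, and use the extra $\eps$ in the exponent of $v$ to get that domination uniformly in $r$.
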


\begin{proof} Let $u_1(x_1)$ be any smooth, nondecreasing function such that $u_1(x_1) = -1$ for $x_1\leq -1$ and $u_1(x_1)= 1$ for $x_1\geq 1$, and define $u_r(x_1) = u_1(x_1/r)$. Choose $r>0$ small enough that $\eta(r)<1$. Then the oscillation $\omega_{u_r}(B_r) = \sup_{B_r} u_r(x) - \inf_{B_r} u_r(x) = 1 \geq \eta(r)$, so $u_r(x)$ breaks $\eta$.

The function $u$ satisfying (\ref{e:bounded}) will be of the form $u(x) = u_r(x_1) + v(x_1)$, with $v$ another function depending only on $x_1$ and increasing in $x_1$. To define $v$, we pick a small $\eps>0$ (such that $\alpha+\eps<1$) and set
\[v(x_1) = \begin{cases} -2^{1-\alpha-\eps}, &x_1<-2,\\
							\sgn(x_1)|x_1|^{1-\alpha-\eps}, &|x_1|\leq 2,\\
							2^{1-\alpha-\eps}, &x_1>2.\end{cases}\]
Since $v$ is nondecreasing in $x_1$, the function $u(x) = u_r(x) + v(x)$ also breaks the modulus of continuity $\eta$.

We claim there exists $\delta>0$, independent of $r$, such that $L_4 u(x_1) \geq |L_1 u(x_1)|$ for $|x_1|<\delta$. To establish this, we first estimate $L_1 u_1$. By symmetry, we have
\begin{equation}\label{e:L1}
L_1 u_1(x_1) = c_0\int_\R \frac{u_1(x_1+y_1) - u_1(x_1)}{|y_1|^{1+\alpha}}\dd y_1,
\end{equation}
where $c_0$ depends on $\alpha,\lambda,\Lambda$, and $n$. For $x_1>1$, we have
\begin{equation}\label{e:L1est}
|L_1 u_1(x_1)| \leq c_0\int_{-\infty}^{1-x_1} \frac 2{|y_1|^{1+\alpha}}\dd y_1 = \frac{2c_0}{\alpha}\,|x_1-1|^{-\alpha}\leq C|x_1|^{-\alpha},
\end{equation}
for some constant $C$, and similarly, $|L_1 u_1(x_1)|\leq C|x_1|^{-\alpha}$ for $x_1<-1$. For $|x_1|\leq 1$, we have
\[|L_1 u_1(x_1)|\leq \int_{-\infty}^{-1-x_1} \frac 2{|y_1|^{1+\alpha}}\dd y_1 + \int_{-1-x_1}^{1-x_1} \frac{u_1(x_1+y_1)-u(x_1)}{|y_1|^{1+\alpha}}\dd y_1 + \int_{1-x_1}^\infty \frac 2{|y_1|^{1+\alpha}}\dd y_1.\]
Similarly to (\ref{e:L1est}), the first and third terms are bounded by $C|x_1|^{-\alpha}$, and since $u_1$ is smooth, the middle term is bounded by some constant, uniformly in $|x_1|\leq 1$. We conclude that $|L_1u_1(x_1)|\leq C|x_1|^{-\alpha}$ holds uniformly in $x_1\in\R$, for some $C$. Combined with the scaling (\ref{e:scaling}), this implies that for $u_r$,
\[|L_1 u_r(x_1)| = r^{-\alpha}|L_1u_1(x/r)|\leq C|x_1|^{-\alpha},\]
for some constant $C$ independent of $r$.

Next, we estimate $L_1 v$. Letting $\tilde v(x_1) = \sgn(x_1)|x_1|^{1-\alpha-\eps}$, the homogeneity $\tilde v(\lambda x_1) = \lambda^{1-\alpha-\eps}\tilde v(x_1)$ and the scaling (\ref{e:scaling}) imply that $|L_1 \tilde v(x_1)| =  C|x_1|^{1-2\alpha-\eps}$ for some $C$. For $|x_1|\leq 1$, we have
\begin{align*}
|L_1 v(x_1)| &\leq |L_1\tilde v(x_1)| + |L_1 (\tilde v - v)(x_1)|\\
& \leq C|x_1|^{1-2\alpha - \eps} + \int_{\R\setminus [-2-x_1,2-x_1]} \frac{|x_1+y_1|^{1-\alpha-\eps} - 2^{1-\alpha-\eps}}{|y_1|^{1+\alpha}}\dd y_1\\
&\leq C|x_1|^{1-2\alpha - \eps} + \int_{-\infty}^{-2-x_1} \frac {C|y_1|^{1-\alpha - \eps}}{|y_1|^{1+\alpha}}\dd y_1 + \int_{2-x_1}^{\infty} \frac {C|y_1|^{1-\alpha - \eps}}{|y_1|^{1+\alpha}}\dd y_1\\
&\leq C|x_1|^{1-2\alpha-\eps} + C\left(|x_1-2|^{1-2\alpha-\eps} + |x_1+2|^{1-2\alpha - \eps}\right)\\
 &\leq C|x_1|^{1-2\alpha-\eps}, 
\end{align*}
where $C$ denotes a changing constant.

We have $L_4 u_r(x_1) = \dfrac {C_1} r \chi_{[-r,r]}(x_1)$ and $L_4 v(x_1) = (1-\alpha - \eps)|x_1|^{-\alpha-\eps}\chi_{[-2,2]}(x_1)$. Since $L_4 u = L_4 u_r + L_4 v$ grows faster than $|L_1 u| = |L_1 u_r + L_1 v|$ as $x_1\rightarrow 0$, there is a $\delta>0$ such that $L_4 u(x_1)\geq |L_1 u(x_1)|$ when $|x_1|<\delta$, as required. This $\delta$ does not depend on $r$.

We now choose
\begin{equation}\label{e:a}
a(x) = \begin{cases}\dfrac{ L_1 u(x)}{L_4 u(x)}, & |x_1|<\delta,\\
 0, &|x_1|\geq \delta,\end{cases}
\end{equation}
so that $L u(x) = a(x)L_2 u(x)$ on $|x_1|<\delta$. On $\delta\leq |x_1|\leq 1$, we have $L u (x) = L_1u(x)$. By the above estimates, $L_1(u_r+v)$ is bounded for $\delta\leq |x_1|\leq 1$. Since $L_2 u$ is bounded for any bounded $u$, we conclude that $u(x) = u_r(x_1) + v(x_1)$ satisfies (\ref{e:bounded}). 
\end{proof}

Note that the result of the previous lemma remains true for any choice of $c(x) \in [-1,1]$, since $L_3 u(x)$ is a bounded function, for any bounded $u$. A nonzero choice of $c(x)$ will be used to make the right hand side of the equation zero and obtain our main result.

In the next lemma, we find a function $w$ such that $|L_1 w|$ is small, $L_3 w$ is large, and $L_2 w$ and $L_4 w$ cancel each other. If we add $w$ to the function $u_r+v$ from Lemma \ref{l:bounded}, these properties will allow us to choose $a(x)$ and $c(x)$ in (\ref{e:L}) such that $L(u_r+v+w) = 0$.

\begin{lemma}\label{l:w}
For any constant $C_0>0$, there exists a bounded function $w$ depending only on $x_1$, and monotonically increasing in $x_1$, satisfying
\begin{align}
L_3 w(x) - |L_1 w(x)| &\geq C_0,\label{e:w}\\
L_2 w(x) - L_4 w(x) &= 0,\label{e:w2}
\end{align}
for all $x\in B_1$, where $L_i$ are defined in \eqref{e:L123}.
\end{lemma}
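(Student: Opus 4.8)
The plan is to construct $w$ in the self-similar form $w(x_1)=H\,\Phi(x_1/R)$, where $\Phi\colon\R\to\R$ is a fixed profile and the parameters $R>2$, $H>0$ are chosen only at the very end: $R$ will depend on $\alpha,\lambda,\Lambda,n$ alone, and $H$ on $C_0$ as well. For $\Phi$ I would take any smooth, odd, nondecreasing, $1$-Lipschitz function with $\Phi(t)=t$ for $|t|\le 1$ and $\Phi$ bounded (e.g.\ affine of slope $1$ on $[-1,1]$ and constant for $|t|\ge 2$). Then $w$ is bounded, nondecreasing, and a function of $x_1$ only, so all the qualitative requirements of the lemma hold automatically; on $[-R,R]$ the function $w$ is affine with slope $m:=H/R$. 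I would dispose of \eqref{e:w2} first: if $x\in B_1$ then $[x_1-1,x_1+1]\subset(-2,2)\subset(-R,R)$, where $w$ is affine, so for $y\in B_1$ one has $w(x_1+y_1)-w(x_1)=m\,y_1$, and
\begin{align*}
L_2 w(x)=\frac{\Lambda-\lambda}{2}\int_{B_1} m\,y_1\,\sgn(y_1)\dd y=\frac{\Lambda-\lambda}{2}\,m\,\frac{\omega_n}{2}=b(x)\cdot(m e_1)=b(x)\cdot\nabla w(x)=L_4 w(x),
\end{align*}
using $\int_{B_1}y\,\sgn(y_1)\dd y=\frac{\omega_n}{2}e_1$ and $\nabla w(x)=m e_1$. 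Thus \eqref{e:w2} holds on $B_1$ for every admissible choice of $R,H$.

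It then remains to arrange \eqref{e:w}. On $B_1$ we have $x_1\in(-1,1)$. By the scaling \eqref{e:scaling}, $L_1 w(x)=HR^{-\alpha}[L_1\Phi](x_1/R)$; since $\Phi$ is smooth and bounded, $L_1\Phi$ is smooth, and since $\Phi$ is odd and $L_1$ maps odd functions to odd functions, $L_1\Phi(0)=0$, so $|L_1\Phi(t)|\le C|t|$ for $|t|\le 1$ with $C$ depending only on $\Phi$, hence on $\alpha,\lambda,\Lambda,n$. Therefore $|L_1 w(x)|\le CHR^{-\alpha}|x_1|/R\le CHR^{-1-\alpha}$ on $B_1$. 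For $L_3 w$, the integrand $(w(x_1+y_1)-w(x_1))\sgn(y_1)|y|^{-n-\alpha}$ is nonnegative because $w$ is nondecreasing, so I may keep only the part over $\{y_1>1,\ |y|<R/2\}$, on which $x_1+y_1\in(0,R)$ lies in the affine zone and $w(x_1+y_1)-w(x_1)=m\,y_1$; this gives
\[
L_3 w(x)\ \ge\ \frac{\Lambda-\lambda}{2}\,m\int_{\{y_1>1,\,|y|<R/2\}} y_1\,|y|^{-n-\alpha}\dd y .
\]
Since $\alpha<1$, a routine estimate shows the last integral is at least $c\,R^{1-\alpha}$ once $R$ is large, so $L_3 w(x)\ge c_3\,m\,R^{1-\alpha}=c_3\,HR^{-\alpha}$ on $B_1$, with $c_3=c_3(\alpha,\lambda,\Lambda,n)>0$.

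Combining the two estimates, on $B_1$ we obtain $L_3 w(x)-|L_1 w(x)|\ge(c_3-CR^{-1})HR^{-\alpha}$. I would now fix $R=R(\alpha,\lambda,\Lambda,n)$ large enough that $R>2$, that the lower bound for $L_3 w$ above is valid, and that $CR^{-1}\le c_3/2$; then choose $H=H(C_0,\alpha,\lambda,\Lambda,n)$ so large that $\frac{c_3}{2}HR^{-\alpha}\ge C_0$. This yields \eqref{e:w} and completes the proof.

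The step I expect to require the most care is the inequality $|L_1 w|\ll L_3 w$ on $B_1$: a crude bound makes both quantities of size $HR^{-\alpha}$, and the extra factor $R^{-1}$ saved on $|L_1 w|$ rests on two properties of $\Phi$ acting together — it is affine near the origin, which kills the short-range singular part of $L_1$, and it is odd, which forces $L_1\Phi(0)=0$ so that $L_1\Phi$ is small on the $O(1/R)$-neighborhood of $0$ into which $B_1$ is mapped under the rescaling. By contrast, the identity $L_2 w=L_4 w$ comes essentially for free: over $B_1$ the operator $L_2$ only probes $w$ on $[-2,2]$, where $w$ is affine, and on affine functions $L_2$ reduces to a constant times the slope, which the drift $L_4$ is tuned precisely to match.
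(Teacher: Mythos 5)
Your proposal is correct and follows essentially the same route as the paper: a rescaled truncated-linear profile whose affinity on a neighborhood of $B_2$ makes $L_2w-L_4w$ vanish exactly, with the key gain $|L_1w|\lesssim HR^{-1-\alpha}$ coming from oddness of the profile (so $L_1\Phi(0)=0$) plus regularity of $L_1\Phi$ near $0$, against $L_3w\gtrsim HR^{-\alpha}$ from monotonicity. The only cosmetic differences are that you use a smooth profile and an abstract smoothness/oddness argument where the paper computes $L_1$ of the piecewise-linear profile explicitly, and you carry two parameters $H,R$ where the paper uses $Cw_K$ with a single scale $K$.
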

\begin{proof}
Let $w_1(x_1)$ be defined by
\[w_1(x_1)  = \begin{cases} -1, &x_1<-1,\\
							x_1, &|x_1|\leq 1,\\
							1, &x_1>1,\end{cases},\]
and let $w_K(x_1) = Kw_1(x_1/K)$, where $K>2$ is a large number to be determined later. Since $w_K$ is linear when $|x_1|\leq 1$, the identity $w_K(x+y) - w_K(x) - y\cdot \nabla w_K(x) = 0$ holds there, so we have $L_2 w_K(x) - L_4 w_K(x) = 0$ in $B_1$. 

Next, we claim that for $K$ large enough, $|L_1 w_K (x_1)|$ will be uniformly bounded by an arbitrarily small constant $c$ for $|x_1|\leq 1$. By a direct computation, if $|x_1|\leq 1$, we have
\begin{align*}
L_1 w_1(x_1) &= c_0\left(\int_{-\infty}^{-1-x_1} \frac{-1-x_1}{|y_1|^{1+\alpha}}\dd y_1 + \int_{-1-x_1}^{1-x_1} \frac{y_1}{|y_1|^{1+\alpha}}\dd y_1 + \int_{1-x_1}^\infty \frac{1-x_1}{|y_1|^{1+\alpha}}\dd y_1\right)\\
&= \frac{c_0}{\alpha(1-\alpha)}  \left(|x_1-1|^{1-\alpha} - |x_1+1|^{1-\alpha}\right),
\end{align*}
with $c_0$ as in (\ref{e:L1}). Since $g(x_1) = L_1 w_1(x_1)$ is differentiable at $x_1 = 0$, we have $|g(x_1)| / |x_1| \rightarrow C$ as $x_1\rightarrow 0$, for some constant $C$. By the scaling (\ref{e:scaling}), this implies
\begin{equation}\label{e:Deltaw}
|L_1 w_K(x_1)| =   K^{1-\alpha} |L_1 w_1(x_1/K)| \leq K^{1-\alpha}C|x_1/K| = CK^{-\alpha},
\end{equation}
for $|x_1|\leq 1$, with $C$ independent of $K$. Therefore, for $K$ large enough, $|L_1 w_K(x_1)| \leq c<1$ for all $|x_1|\leq 1$ and a small constant $c$.

For $L_3 w_K$, since the integrand $(w_K(x_1+y_1) - w_K(x_1))\sgn(y_1)/|y|^{n+\alpha}$ is positive everywhere, we can write
\begin{align*}
L_3 w_K(x_1) &= \int_{\R^n\setminus B_1}\frac {w_K(x_1+y_1) - w_K(x_1)}{|y|^{n+\alpha}}\sgn(y_1) \dd y_1\\
&\geq \int_{B_{K-1}\setminus B_1} \frac {|y_1|}{|y|^{n+\alpha}} \dd y_1\\
&= \int_1^{K-1} \int_{\mathbb S^{n-1}} \frac {\rho |\theta_1|}{\rho^{n+\alpha}}  \rho^{n-1}\dd \theta \dd \rho\\
&= \frac{(K-1)^{1-\alpha} - 1}{1-\alpha} \int_{\mathbb S^{n-1}} |\theta_1| \dd \theta\\
&= C((K-1)^{1-\alpha} - 1).
\end{align*}
This lower bound holds uniformly for $|x_1|\leq 1$, so for $K$ large enough, $\inf_{|x_1|\leq 1} w_K(x_1) > 1$. This implies we can choose $K$, depending on $\alpha,n,\lambda$, and $\Lambda$, such that 
\[L_3 w_K(x_1) - |L_1 w_K(x_1)| \geq 1 - c > 0, \quad |x_1|\leq 1.\]
Therefore, given $C_0>0$, we can choose a constant $C$ such that 
\[w(x_1) := Cw_K(x_1)\]
satisfies (\ref{e:w}) and (\ref{e:w2}).
\end{proof}

We are now in a position to prove our result.

\begin{proof}[Proof of Theorem \ref{t:main}] 
Fix a modulus of continuity $\eta$. Let $\bar u = \bar u(x_1)$ be the function from Lemma \ref{l:bounded} with the corresponding kernel $K= K_1 + a(x) K_2$.

For all $x \in B_1$, we have
\begin{equation} \label{e:baru}
 -C_0 \leq L_1 \bar u(x) + a(x) (L_2 \bar u(x) - L_4 \bar u(x)) \leq C_0.
\end{equation}

Lemma \ref{l:w} implies the existence of $w(x_1)$ such that 
\begin{equation}\label{e:Lw}
\begin{aligned}
L_1 w + a(x) (L_2 w - L_4 w) + L_3 w \geq C_0,\quad |x_1|\leq 1, \\
L_1 w + a(x) (L_2 w - L_4 w) - L_3 w \leq -C_0,\quad |x_1|\leq 1.
\end{aligned}
\end{equation}
We define $u(x) = \bar u(x_1) + w(x_1)$. By \eqref{e:baru} and \eqref{e:Lw}, we have
\begin{align*}
L_1  u + a(x) (L_2 u - L_4 u) + L_3 u \geq 0, \\
L_1  u + a(x) (L_2 u - L_4 u) - L_3 u \leq 0.
\end{align*}
if $|x_1|\leq 1$. By the intermediate value theorem, there is a $c(x) \in [-1,1]$ so that
\[ L_1  u + a(x) (L_2 u - L_4 u) - c(x) L_3 u \leq 0,\]
which implies $L u(x) = 0$ in $B_1$ by (\ref{e:operator}). 

Since $w$ is also monotonically increasing in $x_1$, the oscillation $\omega_{B_r} u \geq \omega_{B_r} u_r = 1$, and $u$ breaks the modulus of continuity $\eta$.
\end{proof}

\bibliographystyle{plain}
\bibliography{counterexample}
\end{document}